\documentclass[11pt]{article}

\usepackage[a4paper,margin=30mm]{geometry}
\usepackage[T1]{fontenc}
\usepackage[utf8]{inputenc}
\usepackage{lmodern}
\usepackage{microtype}
\usepackage{amsmath,amssymb,amsthm,mathtools}
\usepackage{enumitem}
\usepackage{booktabs}
\usepackage{hyperref}
\usepackage{comment}
\usepackage{tikz}
\usepackage[nameinlink,noabbrev]{cleveref}
\usepackage{float}

\hypersetup{
  colorlinks=true,
  linkcolor=blue,
  citecolor=blue,
  urlcolor=blue,
  pdftitle={A Proof of the Novak-Wozniakowski Conjecture:
Optimal Polynomial Tractability Exponents for the Inverse Star Discrepancy},
  pdfauthor={Josef Dick},
  pdfsubject={Inverse star discrepancy and polynomial tractability},
  pdfkeywords={star discrepancy, inverse discrepancy, tractability, Gram matrix}
}

\theoremstyle{plain}
\newtheorem{theorem}{Theorem}[section]

\newtheorem{lemma}[theorem]{Lemma}

\theoremstyle{remark}
\newtheorem{remark}[theorem]{Remark}
\theoremstyle{definition}

\newcommand{\1}{\mathbf{1}}
\newcommand{\N}{\mathbb{N}}
\newcommand{\R}{\mathbb{R}}
\newcommand{\vol}{\operatorname{vol}}
\newcommand{\tr}{\operatorname{tr}}
\newcommand{\rank}{\operatorname{rank}}

\title{A Proof of the Novak--Wo\'{z}niakowski Conjecture:
Optimal Polynomial Tractability Exponents for the Inverse Star Discrepancy}

\author{
Josef Dick\\
School of Mathematics and Statistics\\
UNSW Sydney\\
Sydney, NSW 2052, Australia\\
\texttt{josef.dick@unsw.edu.au}
}

\date{}

\begin{document}
\maketitle

\begin{abstract}
The inverse of the star discrepancy $n^\ast(d, \varepsilon)$ satisfies
 \[
d \varepsilon^{-1} \lesssim n^{\ast}(d,\varepsilon)\lesssim  d\varepsilon^{-2}
\] for all $d \in \mathbb{N}$ and $0 < \varepsilon < \varepsilon_0$. The upper bound was established by Heinrich, Novak, Wasilkowski and Wo\'{z}niakowski (2001), while the lower bound is due to Hinrichs (2004). Steinerberger (2023) subsequently gave an elementary proof of the latter result. These bounds imply that the inverse of the star discrepancy depends linearly on the dimension, but the exact exponent of $\varepsilon^{-1}$ had remained open. 
\medskip

In this paper we prove a lower bound which shows that the exponent $2$ of $\varepsilon^{-1}$ in the upper bound cannot be improved. More precisely, for every \(0<\alpha<1\) and fixed \(0<A\le B\),
there exist constants \(c_{\alpha,B}>0\) and
\(\varepsilon_{\alpha,A}>0\) such that, for every
\(0<\varepsilon<\varepsilon_{\alpha,A}\) and every integer \(d\)
satisfying
\[
 A\varepsilon^{-\alpha}\le d\le B\varepsilon^{-\alpha},
\]
one has
\[
 n^\ast(d,\varepsilon)
 \ge c_{\alpha,B}\,d\,\varepsilon^{-(2-\alpha)}.
\]
Along these polynomial strips the right-hand side is of order $\varepsilon^{-2}$.  
\medskip

Consequently, every uniform polynomial upper estimate $n^{\ast}(d,\varepsilon)\le C d^q\varepsilon^{-p}$ must satisfy $p\ge2$.  Together with the lower bound of Hinrichs (2004), which forces $q\ge1$, this proves that the exponents $p=2$ and $q=1$ in the Heinrich--Novak--Wasilkowski--Wo\'{z}niakowski upper bound are individually optimal. In particular, the optimal exponent \(p^\ast = 2\), thereby proving the
Novak--Wo\'{z}niakowski conjecture.
\end{abstract}

\medskip
\noindent\textbf{Keywords.} Star discrepancy; inverse discrepancy; tractability; Gram matrix; high-dimensional discrepancy.

\noindent\textbf{2020 Mathematics Subject Classification.} 11K38, 65Y20, 65C05.

\section{Introduction}\label{sec:introduction}

For \(y=(y_1,\dots,y_d)\in[0,1]^d\), write
$
 [0,y):=\prod_{j=1}^d[0,y_j). 
$ For \(d\in\mathbb{N} \), write \([d]:=\{1,\dots,d\}\).
For an \(N\)-point multiset
$
 P=\{x_1,\dots,x_N\}\subset[0,1)^d,
$
the star discrepancy is
\begin{equation*}
 D_N^{\ast}(P)
 :=
 \sup_{y\in[0,1]^d}
 \left|
 \frac1N\sum_{n=1}^N\1_{[0,y)}(x_n)
 -\vol([0,y))
 \right|.
\end{equation*}
The minimal star discrepancy is
\begin{equation*}
 D^{\ast}(N,d)
 :=
 \inf_{P\subset[0,1)^d,\,|P|=N}D_N^{\ast}(P),
\end{equation*}
and its inverse is defined by
\begin{equation*}
 n^{\ast}(d,\varepsilon)
 :=
 \min\{N\in\N: D^{\ast}(N,d)\le\varepsilon\}.
\end{equation*}

Heinrich, Novak, Wasilkowski and Wo\'{z}niakowski \cite{HNWW01} proved that there is an absolute constant \(C_{\rm H}>0\) such that
\begin{equation}\label{eq:HNWW-upper-intro}
 n^{\ast}(d,\varepsilon)
 \le C_{\rm H}d\varepsilon^{-2},
 \qquad d\in\N,\quad 0<\varepsilon<1.
\end{equation}
Their proof proceeds through the corresponding discrepancy estimate
\[
 D^\ast(N,d)\le C\sqrt{\frac dN}
\]
for another absolute constant \(C>0\). The proof is probabilistic.  Aistleitner \cite{Ais11} subsequently obtained the first explicit value for the constant in the discrepancy formulation, showing that
\[
 D^{\ast}(N,d)\le 10\sqrt{\frac dN}.
\]
Pasing and Wei\ss{} \cite{PW20} reduced the constant to \(9\), and Gnewuch, Pasing and Wei\ss{} \cite{GPW21} obtained the explicit bound \(2.4968\). Wei\ss{} \cite{W26} recently improved the constant further to $2.4631832$.  Related probabilistic estimates for negatively dependent samples, including Latin hypercube samples, were developed by Gnewuch and Hebbinghaus \cite{GH21}.  Doerr \cite{Doe14} proved that the expected star discrepancy of \(N\) independent uniformly distributed points is of order \(\sqrt{d/N}\), so this rate is optimal up to constants for i.i.d. uniform sampling. This result was extended to Latin hypercube sampling in \cite{DDG18}.

The work of Heinrich, Novak, Wasilkowski and Wo\'{z}niakowski \cite{HNWW01} also established the lower bound
\[
 n^{\ast}(d,\varepsilon)\gtrsim d\log(\varepsilon^{-1}).
\]
Hinrichs \cite{H04} subsequently strengthened this to: there are absolute constants \(c_{\rm H}>0\) and \(\varepsilon_{\rm H}>0\) such that
\begin{equation}\label{eq:Hinrichs-lower-intro}
 n^{\ast}(d,\varepsilon)
 \ge c_{\rm H}d\varepsilon^{-1},
 \qquad d\in\N,\quad 0<\varepsilon\le\varepsilon_{\rm H}.
\end{equation}
A further combinatorial proof was given by Aistleitner and Hinrichs \cite{AH18}, and Steinerberger \cite{S23} later gave a short elementary proof.  These estimates establish the exact linear dependence on the dimension, but leave a gap between the powers \(1\) and \(2\) of \(\varepsilon^{-1}\).

The purpose of this paper is to resolve the corresponding question at the level of uniform polynomial tractability.  We show that the power \(2\) in \eqref{eq:HNWW-upper-intro} cannot be replaced by any smaller power, even if one allows an arbitrary fixed polynomial dependence on the dimension.  The key lower bound is obtained in joint regimes in which the dimension grows polynomially with \(\varepsilon^{-1}\).

We note that the optimal accuracy exponent is obtained not by strengthening the lower bound of Hinrichs uniformly over all $d$ and $\varepsilon$, but by proving sharper lower bounds in polynomial regimes of the form $d\asymp\varepsilon^{-\alpha}$. Our approach is therefore distinct from the geometric arguments of Hinrichs and Steinerberger.

The proof is based on a Gram-matrix construction.  A centred one-dimensional step function is chosen so that its tensor products over distinct coordinate sets are orthogonal with respect to Lebesgue measure.  Small star discrepancy implies that their empirical inner products over the point set are close to the corresponding Lebesgue inner products.  Evaluating these tensor products at the \(N\) points therefore produces a large family of nearly orthogonal vectors in \(\R^N\).  A trace--rank inequality then forces \(N\) to be large.

This leads to our main result.

\begin{theorem}\label{thm:main-strip}
Let \(0<\alpha<1\) and \(0<A\le B\).  Choose an integer \(m>2/\alpha\), define
\begin{equation}\label{eq:intro-Cm}
 C_m:=\frac m{64}\left(\frac94\right)^{1/m},
\end{equation}
and set
\begin{equation}\label{eq:intro-epsilon-alpha}
 \varepsilon_{\alpha,A,m}
 :=
 \min\left\{
 2^{-m-1},
 \left(\frac A{C_m}\right)^{1/(\alpha-2/m)},
 \left(\frac A m\right)^{1/\alpha}
 \right\}.
\end{equation}
Then, for every \(0<\varepsilon\le\varepsilon_{\alpha,A,m}\) and every integer \(d\) satisfying
\begin{equation}\label{eq:intro-alpha-strip}
 A\varepsilon^{-\alpha}\le d\le B\varepsilon^{-\alpha},
\end{equation}
one has
\begin{equation*}
 n^{\ast}(d,\varepsilon)
 \ge
 \frac1{8B64^m}\,
 d\,\varepsilon^{-(2-\alpha)}.
\end{equation*}
\end{theorem}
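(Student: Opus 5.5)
We follow the Gram-matrix strategy sketched in the introduction. Fix a point set $P=\{x_1,\dots,x_N\}$ with $D_N^\ast(P)\le\varepsilon$. Choose a centred one-dimensional step function $h$ on $[0,1)$, a combination of a few indicators $\1_{[0,t)}$, for instance $h=\1_{[0,1/2)}-\1_{[1/2,1)}$ or a variant with $\int h=0$, $\int h^2=1$, and total variation bounded by an absolute constant. For each $m$-subset $u\subset[d]$ set $f_u(x)=\prod_{j\in u}h(x_j)$. Expanding each factor $h=\sum_i c_i\1_{[0,t_i)}-\text{const}$ writes $f_u f_v$ as a signed combination of at most $K^{|u\cup v|}\le K^{2m}$ indicators of anchored boxes, with coefficient $\ell^1$-norm at most some $L^{2m}$. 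Hence
\[
\Bigl|\frac1N\sum_n f_u(x_n)f_v(x_n)-\int f_uf_v\Bigr|\le L^{2m}\varepsilon .
\]
Because $h$ is centred and the coordinates are independent under Lebesgue measure, $\int f_uf_v=\delta_{uv}$. Bookkeeping of the constants should produce the factor $64^m$; for example $L=8$ with $h$ taking values $\pm1$ on dyadic pieces.

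Next, form the vectors $v_u=N^{-1/2}(f_u(x_n))_{n\le N}\in\R^N$ for all $M=\binom dm$ subsets and let $G=(\langle v_u,v_v\rangle)$. Then $G$ is positive semidefinite with $\rank G\le N$, $\tr G\ge M(1-\delta)$, and off-diagonal entries bounded by $\delta:=64^m\varepsilon$. The trace--rank inequality $\rank G\ge(\tr G)^2/\tr(G^2)$ gives
\[
N\ge\frac{M^2(1-\delta)^2}{M(1+\delta)^2+M^2\delta^2}.
\]
The crude bound $M\delta^2$ on each row sum of squares only yields $N\gtrsim\delta^{-2}=64^{-2m}\varepsilon^{-2}$, which loses the factor $d$. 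This is the main obstacle. To refine it, I would not bound every off-diagonal entry by $\delta$. Instead, for $|u\cap v|=k$ the entry $\int f_uf_v$ vanishes and the error should be controlled by the discrepancy of a product over the symmetric difference, which is of size $|u\triangle v|$. One wants to exploit that most pairs are disjoint. The cleaner route I would try is to restrict to a packing family $\mathcal F$ of $m$-subsets, pairwise intersecting in at most one coordinate, of size $|\mathcal F|\approx d^2/m^2$. Alternatively, one can use rows of $G$ and bound $\sum_v G_{uv}^2$ through $\|\sum_v \sigma_v v_v\|$-type estimates, so that the off-diagonal mass per row is about $\varepsilon^2\cdot(\text{number of relevant }v)$ with only $\sim d/\varepsilon$ effective terms.

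Concretely, the target is $\tr(G^2)\lesssim M+M^2\delta^2$ with $M$ chosen so that $M\delta^2\approx 1$. Then $N\gtrsim\min(M,\delta^{-2})$, and one must arrange $M$ to be about $d\varepsilon^{-(2-\alpha)}/B$. Since $d\asymp\varepsilon^{-\alpha}$, we have $d\varepsilon^{-(2-\alpha)}\asymp d^2\cdot\varepsilon^{-2+2\alpha}\cdot\varepsilon^{-\alpha}\ldots$; more simply, $\varepsilon^{-2}\asymp d\varepsilon^{-(2-\alpha)}/B$. So it suffices to have a subfamily of size $M\ge\varepsilon^{-2}$ with $\binom dm\ge\varepsilon^{-2}$. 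Since $d^m\ge A^m\varepsilon^{-\alpha m}$ and $\alpha m>2$, this is exactly what the conditions $\varepsilon\le(A/C_m)^{1/(\alpha-2/m)}$ and $\varepsilon\le(A/m)^{1/\alpha}$ (ensuring $d\ge m$) in \eqref{eq:intro-epsilon-alpha} guarantee, through $\binom dm\ge(d/m)^m$ and the constant $C_m$. The condition $\varepsilon\le 2^{-m-1}$ keeps $\delta$ small enough that $1-\delta\ge 1/2$ after normalisation.

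Putting it together: select a subfamily of exactly $M\approx\varepsilon^{-2}64^{-2m}$ subsets (possible by the counting above), apply the trace--rank bound to get $N\ge M/8$, and rewrite $\varepsilon^{-2}\ge d\varepsilon^{-(2-\alpha)}/B$ using $d\le B\varepsilon^{-\alpha}$. This gives $N\ge d\varepsilon^{-(2-\alpha)}/(8B64^m)$, which is the claimed bound; the dependence on the constant is $64^{m}$ rather than $64^{2m}$ if the normalisation of $h$ is chosen carefully. The step I expect to be delicate is verifying the orthogonality and error constants tightly enough that the off-diagonal term $M^2\delta^2$ stays comparable to $M$ at the chosen $M$.
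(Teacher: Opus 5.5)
Your architecture is the paper's: tensor products $\prod_{j\in S}h(x_j)$ over $m$-subsets, the anchored-box transfer, the trace--rank inequality $\rank G\ge(\tr G)^2/\tr(G^2)$, $\binom dm\ge(d/m)^m$, and finally $\varepsilon^{-2}\ge d\varepsilon^{-(2-\alpha)}/B$. However, the ``main obstacle'' you identify does not exist. The crude bound $N\gtrsim\delta^{-2}\asymp_m\varepsilon^{-2}$ is already the theorem's order, because $d\le B\varepsilon^{-\alpha}$ on the strip, as you notice a paragraph later. No refinement of the off-diagonal mass is needed. The packing family would in fact be harmful: it has at most about $d^2/m^2\lesssim\varepsilon^{-2\alpha}\ll\varepsilon^{-2}$ members, which caps the bound at $\varepsilon^{-2\alpha}$.

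The genuine gap is in the constants, which the statement fixes explicitly. Using one $\delta=64^m\varepsilon$ for both diagonal and off-diagonal entries fails twice:
\begin{itemize}
\item The hypothesis $\varepsilon\le 2^{-m-1}$ only gives $\delta\le 32^m/2$, so $\tr G\ge M(1-\delta)$ is vacuous.
\item The off-diagonal squares are $64^{2m}\varepsilon^2$, so you end with $64^{2m}$, which you then leave to ``careful normalisation''.
\end{itemize}
Your concrete candidate $h=2\cdot\1_{[0,1/2)}-1$ makes the diagonal exact. But its anchored-box coefficient sum is $3$, so off-diagonal errors are about $9^m\varepsilon$, and this bookkeeping yields a constant of order $81^{-m}$, smaller than $(8\cdot 64^m)^{-1}$ for large $m$.

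The missing step is to bound the two kinds of entries separately, with $h=(3\cdot\1_{[0,1/3)}-1)/\sqrt2$. The cut $t=1/3$ minimises the coefficient sum $(1+t)/\sqrt{t(1-t)}$ of a normalised centred $a\1_{[0,t)}-b$, with minimum $2\sqrt2$.
\begin{itemize}
\item The identity $h^2=\frac32\1_{[0,1/3)}+\frac12$, with coefficient sum $2$, gives $|G_{S,S}-1|\le 2^m\varepsilon\le\frac12$. This is exactly what $\varepsilon\le 2^{-m-1}$ is for.
\item For $S\ne T$ with $s=|S\cap T|$ and $r=m-s$, the coefficient sum $(2\sqrt2)^{2r}2^s=2^m4^r\le 8^m$ gives $|G_{S,T}|\le 8^m\varepsilon$.
\end{itemize}
Then $\tr(G^2)\le\frac94M+M^2 64^m\varepsilon^2$, and $N\ge M/(9+4M64^m\varepsilon^2)\ge 1/(8\cdot 64^m\varepsilon^2)$ once $t:=M64^m\varepsilon^2\ge\frac94$, since $t/(9+4t)\ge\frac18$. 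This threshold is precisely what $C_m$ is calibrated for, because $(C_m/m)^m=\frac94\,64^{-m}$. So the $\frac18$ in the theorem comes from this step, not from $N\ge M/8$, which the trace--rank bound does not give here.
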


Theorem~\ref{thm:main-strip} has an immediate consequence that is stronger than a lower bound on any single prescribed curve.  Along the strip \(d\asymp\varepsilon^{-\alpha}\), its right-hand side satisfies
\[
 d\varepsilon^{-(2-\alpha)}\asymp\varepsilon^{-2}.
\]
Suppose that, for some fixed \(p,q\ge0\), a uniform estimate
\begin{equation}\label{eq:intro-hypothetical-upper}
 n^{\ast}(d,\varepsilon)\le C d^q\varepsilon^{-p}
\end{equation}
were valid for all dimensions and all sufficiently small \(\varepsilon\).  Taking \(d=\lceil\varepsilon^{-\alpha}\rceil\), Theorem~\ref{thm:main-strip} gives a lower bound of order \(\varepsilon^{-2}\), whereas \eqref{eq:intro-hypothetical-upper} gives an upper bound of order \(\varepsilon^{-(p+\alpha q)}\).  Since \(\alpha>0\) may be chosen arbitrarily small, this comparison forces \(p\ge2\).  Thus the accuracy exponent in the upper estimate from \cite{HNWW01} is optimal even when the power of \(d\) is allowed to be any fixed finite number.

To state this conclusion precisely, define the \emph{accuracy exponent}
\begin{equation*}
 p^{\ast}
 :=
 \inf\left\{
 p\ge0:
 \begin{array}{l}
 \text{there exist }C>0,\ q\ge0,\ \varepsilon_1>0\text{ such that}\\
 n^{\ast}(d,\varepsilon)\le C d^q\varepsilon^{-p}
 \text{ for all }d\in\N,\ 0<\varepsilon\le\varepsilon_1
 \end{array}
 \right\},
\end{equation*}
and the \emph{dimension exponent}
\begin{equation*}
 q^{\ast}
 :=
 \inf\left\{
 q\ge0:
 \begin{array}{l}
 \text{there exist }C>0,\ p\ge0,\ \varepsilon_1>0\text{ such that}\\
 n^{\ast}(d,\varepsilon)\le C d^q\varepsilon^{-p}
 \text{ for all }d\in\N,\ 0<\varepsilon\le\varepsilon_1
 \end{array}
 \right\}.
\end{equation*}

\begin{theorem}[Optimal polynomial tractability exponents]\label{thm:tractability-exponents}
The inverse star discrepancy satisfies
\begin{equation*}
 p^{\ast}=2
 \qquad\text{and}\qquad
 q^{\ast}=1.
\end{equation*}
Consequently, the powers of \(\varepsilon^{-1}\) and \(d\) in the general upper bound of Heinrich, Novak, Wasilkowski and Wo\'{z}niakowski \cite{HNWW01} are individually optimal.
\end{theorem}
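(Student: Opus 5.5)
We prove the two equalities separately, each as a pair of inequalities, using only \eqref{eq:HNWW-upper-intro}, \eqref{eq:Hinrichs-lower-intro} and Theorem~\ref{thm:main-strip}.

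\emph{Upper bounds.} By \eqref{eq:HNWW-upper-intro}, the pair $(p,q)=(2,1)$ is admissible: take $C=C_{\rm H}$ and any $\varepsilon_1<1$. Hence $2$ belongs to the set defining $p^\ast$, and $1$ belongs to the set defining $q^\ast$. This gives $p^\ast\le 2$ and $q^\ast\le 1$.

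\emph{Lower bound $p^\ast\ge2$.} Suppose $n^\ast(d,\varepsilon)\le Cd^q\varepsilon^{-p}$ for all $d$ and all $0<\varepsilon\le\varepsilon_1$. Fix $\alpha\in(0,1)$, take $A=1$, $B=2$, and choose an admissible $m$. For every $\varepsilon\le\min\{\varepsilon_1,\varepsilon_{\alpha,1,m},1/2\}$ put $d=\lceil\varepsilon^{-\alpha}\rceil$. Since $\varepsilon^{-\alpha}\ge1$, we have $\varepsilon^{-\alpha}\le d\le 2\varepsilon^{-\alpha}$, so $d$ lies in the strip \eqref{eq:intro-alpha-strip}. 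Theorem~\ref{thm:main-strip} then gives
\[
 \frac{1}{16\cdot 64^m}\,\varepsilon^{-2}
 \le \frac{1}{16\cdot 64^m}\, d\,\varepsilon^{-(2-\alpha)}
 \le n^\ast(d,\varepsilon)
 \le C2^q\varepsilon^{-p-\alpha q}.
\]
Letting $\varepsilon\to0$ forces $p+\alpha q\ge2$. Since $q$ is fixed and $\alpha\in(0,1)$ is arbitrary, letting $\alpha\to0$ yields $p\ge2$. Thus every admissible $p$ is at least $2$, and $p^\ast\ge2$.

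\emph{Lower bound $q^\ast\ge1$.} Suppose $n^\ast(d,\varepsilon)\le Cd^q\varepsilon^{-p}$ with $q<1$. Fix $\varepsilon=\min\{\varepsilon_1,\varepsilon_{\rm H}\}$. Then \eqref{eq:Hinrichs-lower-intro} gives $c_{\rm H}d\varepsilon^{-1}\le Cd^q\varepsilon^{-p}$ for every $d\in\N$, i.e.\ $d^{1-q}\le C c_{\rm H}^{-1}\varepsilon^{1-p}$. The right-hand side is a constant independent of $d$, so letting $d\to\infty$ gives a contradiction. Hence every admissible $q$ satisfies $q\ge1$, and $q^\ast\ge1$.

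Combining the four inequalities gives $p^\ast=2$ and $q^\ast=1$, which is the claimed optimality of the exponents in \eqref{eq:HNWW-upper-intro}. The only delicate step is the order of limits in the $p^\ast$ argument. One must first send $\varepsilon\to0$ with $\alpha$ fixed, which is legitimate because $m$, $\varepsilon_{\alpha,1,m}$ and the constant $16\cdot 64^m$ depend on $\alpha$ but not on $\varepsilon$. Only afterwards may one let $\alpha\to0$, using that $q$ is a fixed finite number; otherwise the diverging constant $64^m$ would interfere.
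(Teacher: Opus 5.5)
Your proposal is correct and follows the paper's proof essentially step by step: the HNWW bound gives the upper estimates, Hinrichs' bound at a fixed $\varepsilon$ with $d\to\infty$ gives $q^\ast\ge1$, and Theorem~\ref{thm:main-strip} with $A=1$, $B=2$, $d=\lceil\varepsilon^{-\alpha}\rceil$ gives $p^\ast\ge2$. The differences are cosmetic: you let $\varepsilon\to0$ and then $\alpha\to0$, where the paper argues by contradiction after choosing $\alpha$ with $p+\alpha q<2$, and you are slightly more careful in also requiring the fixed $\varepsilon$ in the $q^\ast$ argument to be at most $\varepsilon_1$.
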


The assertion \(q^\ast=1\) follows by combining the upper bound from
\cite{HNWW01} with the lower bound of Hinrichs \cite{H04}.
As established above, Theorem~\ref{thm:main-strip} yields \(p^\ast=2\),
thereby proving the Novak--Wo\'{z}niakowski conjecture
\cite[p.~63]{NW10}; see also \cite[Problem~3]{H03}.

\begin{remark}\label{rem:strip-optimality}
Fix \(0<\alpha<1\).  On a strip \(d\asymp\varepsilon^{-\alpha}\), Theorem~\ref{thm:main-strip} gives
\[
 n^{\ast}(d,\varepsilon)
 \gtrsim d\varepsilon^{-(2-\alpha)}
 \asymp\varepsilon^{-2}.
\]
The total power \(2\) of \(\varepsilon^{-1}\) is exactly the power needed to prove that $p^\ast \ge 2$. Since the upper bound from \cite{HNWW01} attains \(p=2\), no stronger strip exponent is required to establish \(p^{\ast}=2\).  In this precise tractability sense, Theorem~\ref{thm:main-strip} is optimal.

On the other hand, there is still a gap in the precise bound on the strip $d \asymp \varepsilon^{-\alpha}$. On these strips our lower bound shows $n^\ast(d, \varepsilon) \gtrsim \varepsilon^{-2}$, whereas the upper bound only gives
\begin{equation*}
n^\ast(d,\varepsilon) \lesssim d\varepsilon^{-2} \asymp \varepsilon^{-(2+\alpha)}.
\end{equation*}
Determining the exact mixed dependence on \(d\) and \(\varepsilon\) in individual joint regimes remains an open problem.
\end{remark}

\begin{remark}[The fixed-dimensional obstruction]\label{rem:fixed-d-obstruction}
Theorem~\ref{thm:tractability-exponents} concerns uniform polynomial exponents; it does not assert a pointwise lower bound
\[
 n^{\ast}(d,\varepsilon)\gtrsim d\varepsilon^{-2}
\]
for every pair \((d,\varepsilon)\). Indeed, such a bound is false. To see this, for every fixed \(d\), classical low-discrepancy constructions satisfy
\[
 D_N^{\ast}(P_N)
 \le C_d\frac{(\log N)^{d-1}}N,
\]
see, for example, \cite{DP10}.  Consequently,
\[
 n^{\ast}(d,\varepsilon)
 \le C'_d\varepsilon^{-1}
 \bigl(1+\log\varepsilon^{-1}\bigr)^{d-1}
\]
for sufficiently small \(\varepsilon\), and this is \(o(\varepsilon^{-2})\) for every fixed dimension. 
\end{remark}

\begin{remark}[Comparison with the lower bound of Hinrichs]\label{rem:comparison-hinrichs-gram}
The lower bound of Hinrichs,
\[
 n^{\ast}(d,\varepsilon)\gtrsim d\varepsilon^{-1},
\]
and the Gram-matrix lower bound obtained in this paper are complementary.  Neither estimate dominates the other uniformly over all pairs \((d,\varepsilon)\).

To explain the comparison, consider first a polynomial strip
\[
 A\varepsilon^{-\alpha}\le d\le B\varepsilon^{-\alpha},
 \qquad 0<\alpha<1.
\]
By Theorem~\ref{thm:main-strip} we obtain
\[
 n^{\ast}(d,\varepsilon)\gtrsim d\,\varepsilon^{-(2-\alpha)}
 \asymp \varepsilon^{-2},
\]
whereas the lower bound of Hinrichs yields only
\[
 n^{\ast}(d,\varepsilon)\gtrsim d\varepsilon^{-1}
 \asymp \varepsilon^{-(1+\alpha)}.
\]
Hence on every such strip the Gram-matrix bound is asymptotically stronger.

By contrast, if \(d\gg \varepsilon^{-1}\), then
\[
 d\varepsilon^{-1}\gg \varepsilon^{-2},
\]
so the Hinrichs bound is stronger than the Gram-matrix bound, whose strength is of order \(\varepsilon^{-2}\) in the high-dimensional polynomial-strip regime.  On the transition scale
\[
 d\asymp \varepsilon^{-1},
\]
both lower bounds are of the same order \(\varepsilon^{-2}\).

Finally, for any given $\alpha \in (0, 1)$ and for every fixed dimension $d$, condition \eqref{eq:intro-alpha-strip} does not hold for sufficiently small \(\varepsilon\), whereas Hinrichs' bound continues to give
\[
 n^{\ast}(d,\varepsilon)\gtrsim d\varepsilon^{-1}.
\]
Thus Hinrichs' bound remains stronger in fixed dimension.  It is also
stronger in the opposite high-dimensional regime \(d\gg\varepsilon^{-1}\).
For fixed \(m\), the Gram-matrix argument is effective in the intermediate
range
\[
 \varepsilon^{-2/m}\lesssim d\ll\varepsilon^{-1},
\]
up to constants depending on \(m\).  In particular, it improves the
dependence on \(\varepsilon^{-1}\) along every polynomial strip
\(d\asymp\varepsilon^{-\alpha}\) for which \(m>2/\alpha\).

Figure~\ref{fig:hinrichs-vs-gram} illustrates the regions of the $(\varepsilon^{-1}, d)$ plane where the Hinrichs bound is stronger than the Gram bound, and vice versa.
\end{remark}

\begin{figure}[H]
\centering
\begin{tikzpicture}[x=1.5cm,y=1.5cm]

\def\xmin{1.0}
\def\ymin{1.0}
\def\xmax{7.6}
\def\ymax{7.6}


\fill[red!14]
  (\xmin,\ymin) --
  (\xmax,\ymin) --
  plot[domain=\xmax:\xmin,samples=100] (\x,{sqrt(\x)}) --
  cycle;

\fill[blue!14]
  (\xmin,\ymin) --
  plot[domain=\xmin:\xmax,samples=100] (\x,{sqrt(\x)}) --
  (\xmax,\xmax) --
  cycle;

\fill[red!14]
  (\xmin,\ymin) --
  (\xmin,\ymax) --
  (\ymax,\ymax) --
  cycle;

\draw[->,thick] (\xmin,\ymin) -- (\xmax+0.45,\ymin)
  node[below right] {$\varepsilon^{-1}$};
\draw[->,thick] (\xmin,\ymin) -- (\xmin,\ymax+0.35)
  node[above left] {$d$};


\draw[thick,dashed]
  plot[domain=\xmin:\xmax,samples=100] (\x,{sqrt(\x)});
\node[above] at (5.0,2.45)
  {$d\asymp \varepsilon^{-\alpha}$};

\draw[thick]
  (\xmin,\ymin) -- (\ymax,\ymax);
\node[rotate=45,above] at (4.1,4.1)
  {$d\asymp\varepsilon^{-1}$};


\node[align=center] at (5.6,1.3)
  {\textbf{Hinrichs stronger}\\
   fixed or very slowly growing \(d\)};

\node[align=center] at (5.2,3.2)
  {\textbf{Gram bound stronger}\\
   \(\varepsilon^{-\alpha}\lesssim d\lesssim \varepsilon^{-1}\)};

\node[align=center] at (3.0,4.9)
  {\textbf{Hinrichs stronger}\\
   \(d\gg \varepsilon^{-1}\)};

\draw[very thick,->] (2.15,1.30) -- (7.5,1.30);
\node[above] at (3.35,1.30) {$d=d_0$ fixed};

\end{tikzpicture}

\caption{Schematic comparison, for a fixed $0 < \alpha < 1$, of the
Hinrichs lower bound and the Gram-matrix lower bound.  The Gram-matrix
estimate can improve on Hinrichs' lower bound in the intermediate region
\(\varepsilon^{-\alpha}\lesssim d\lesssim\varepsilon^{-1}\).
Below the lower transition curve, including every fixed-dimensional
horizontal ray \(d=d_0\) for sufficiently small \(\varepsilon\), the
Hinrichs' lower estimate becomes eventually stronger. Hinrichs' lower bound is also
stronger above the transition scale \(d\asymp\varepsilon^{-1}\).
Constants and the factor \(64^m\) are suppressed in this schematic
comparison.}
\label{fig:hinrichs-vs-gram}
\end{figure}

The remainder of the paper is organised as follows.  Section~\ref{sec:proof-main} proves Theorem~\ref{thm:main-strip}.  The short proof of Theorem~\ref{thm:tractability-exponents} is given in Section~\ref{sec:proof-tractability}.

\section{Proof of the polynomial-strip lower bound}\label{sec:proof-main}

We prove Theorem~\ref{thm:main-strip} through a single Gram-matrix estimate.
For \(P=\{x_1,\dots,x_N\}\subset[0,1)^d\), write
\[
 D:=D_N^\ast(P).
\]

\begin{lemma}[Anchored-box transfer]\label{lem:transfer-short}
Suppose that
\[
 f=c_0+\sum_{\ell=1}^L c_\ell\1_{A_\ell},
 \qquad A_\ell=[0,y^{(\ell)})\subset[0,1)^d.
\]
Then
\[
 \left|
 \frac1N\sum_{n=1}^N f(x_n)
 -\int_{[0,1)^d}f(x)\,dx
 \right|
 \le D\sum_{\ell=1}^L|c_\ell|.
\]
\end{lemma}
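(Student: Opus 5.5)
The plan is to use linearity of both the empirical average and the integral, and then apply the definition of the star discrepancy to each anchored box separately.

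First, the constant term contributes nothing to the error. Since the points lie in $[0,1)^d$, we have $\frac1N\sum_{n}c_0=c_0=\int_{[0,1)^d}c_0\,dx$. Hence
\[
 \frac1N\sum_{n=1}^N f(x_n)-\int_{[0,1)^d}f(x)\,dx
 =\sum_{\ell=1}^L c_\ell\left(\frac1N\sum_{n=1}^N\1_{[0,y^{(\ell)})}(x_n)-\vol([0,y^{(\ell)}))\right),
\]
where we used $\int_{[0,1)^d}\1_{[0,y)}=\vol([0,y))$. This identity holds because $[0,y)\subset[0,1)^d$ for $y\in[0,1]^d$.

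Next, we apply the triangle inequality. Each bracketed term is, in absolute value, one of the quantities whose supremum over $y\in[0,1]^d$ defines $D=D_N^\ast(P)$. Each bracket is therefore bounded by $D$, and summing gives the bound $D\sum_\ell|c_\ell|$.

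There is no real obstacle here. The only points to check are that each $y^{(\ell)}$ lies in $[0,1]^d$, which is implicit in $A_\ell=[0,y^{(\ell)})\subset[0,1)^d$, and that the constant term cancels exactly because the normalisation is $1/N$ and the domain has unit volume. Multiset repetitions cause no issue, since the sum runs over indices.
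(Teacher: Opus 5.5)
Your proposal is correct and follows the paper's proof: the constant term cancels, linearity reduces the error to a weighted sum of local discrepancies of anchored boxes, each of which is at most $D$, and the triangle inequality finishes. Your extra checks, on unit volume and on $y^{(\ell)}\in[0,1]^d$, are fine; the only exception to the latter is a degenerate empty box, which contributes $0$ anyway.
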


\begin{proof}
The constant term cancels, and for every \(\ell\),
\[
 \left|
 \frac1N\sum_{n=1}^N\1_{A_\ell}(x_n)-\vol(A_\ell)
 \right|\le D.
\]
The result follows from linearity and the triangle inequality.
\end{proof}

For \(j\in[d]\), put
\[
 I_j(x):=\1_{[0,1/3)}(x_j),
 \qquad
 h(t):=\frac{3\1_{[0,1/3)}(t)-1}{\sqrt2}.
\]
The elementary identities
\begin{equation}\label{eq:h-short}
 \int_0^1h(t)\,dt=0,\qquad
 \int_0^1h(t)^2\,dt=1,\qquad
 h(t)^2=\frac32\1_{[0,1/3)}(t)+\frac12
\end{equation}
will be used repeatedly.  The sums of the absolute values of the
coefficients in the expansions of \(h\) and \(h^2\) are, respectively,
\[
 | 3/\sqrt{2}| + |-1/\sqrt{2}| = 2\sqrt2
 \qquad\text{and}\qquad
 |3/2| + |1/2| = 2.
\]
Moreover, for every \(U\subseteq[d]\),
\[
 \prod_{j\in U}I_j(x)=\1_{[0,y(U))}(x),
 \qquad
 y(U)_j=
 \begin{cases}
 1/3,&j\in U,\\
 1,&j\notin U.
 \end{cases}
\]
Thus every indicator monomial appearing below is the indicator of an
anchored box.

\begin{lemma}[Gram-matrix estimate]\label{lem:gram-short}
Let \(1\le m\le d\), set \(M=\binom dm\), and assume that
\[
 D\le2^{-m-1}.
\]
Then
\begin{equation*}
 N\ge\frac{M}{9+4M64^mD^2}.
\end{equation*}
\end{lemma}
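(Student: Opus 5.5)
The plan is a trace--rank argument for the Gram matrix of the tensor products of $h$. For each $U\subseteq[d]$ with $|U|=m$, put $h_U(x):=\prod_{j\in U}h(x_j)$ and define $v_U\in\R^N$ by $v_U(n):=N^{-1/2}h_U(x_n)$. Let $G$ be the $M\times M$ Gram matrix $G_{UV}=\langle v_U,v_V\rangle=\frac1N\sum_{n=1}^N h_U(x_n)h_V(x_n)$. Then $G$ is positive semidefinite, and $\rank G\le N$ since all $v_U$ lie in $\R^N$. Cauchy--Schwarz on the nonzero eigenvalues gives $(\tr G)^2\le \rank(G)\,\tr(G^2)$. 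Hence
\[
N\ge \frac{(\tr G)^2}{\sum_{U,V}G_{UV}^2}.
\]
So it suffices to bound the diagonal of $G$ from below and the squared entries of $G$ from above.

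The entries are controlled by Lemma~\ref{lem:transfer-short}. Write
\[
h_Uh_V=\prod_{j\in U\cap V}h(x_j)^2\prod_{j\in U\triangle V}h(x_j).
\]
Substitute $h=\frac{3}{\sqrt2}I_j-\frac1{\sqrt2}$ and $h^2=\frac32 I_j+\frac12$ from \eqref{eq:h-short}, and expand. This writes $h_Uh_V$ as a linear combination of indicator monomials $\prod_{j\in W}I_j$, and each of these is the indicator of the anchored box $[0,y(W))$. The sum of the absolute values of all coefficients is at most $2^{|U\cap V|}(2\sqrt2)^{|U\triangle V|}$, by multiplicativity of the $\ell^1$ coefficient norm. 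Since $|U\cap V|+|U\triangle V|/2=m$, this quantity is at most $2^m\cdot 2^{3|U\triangle V|/2}\le 8^m$. By \eqref{eq:h-short} and Fubini, $\int h_Uh_V=\delta_{UV}$, because any coordinate in $U\triangle V$ contributes a factor $\int h=0$. Lemma~\ref{lem:transfer-short} then gives $|G_{UV}-\delta_{UV}|\le 8^mD$ for all pairs.

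On the diagonal the finer bound is available: $h_U^2=\prod_{j\in U}(\frac32I_j+\frac12)$ has coefficient sum $2^m$. Hence $|G_{UU}-1|\le 2^mD\le\frac12$ under the hypothesis $D\le2^{-m-1}$. This is the only place that hypothesis enters. It yields $\tr G\ge M/2$ and $G_{UU}^2\le 9/4$. For the off-diagonal entries, $G_{UV}^2\le 64^mD^2$, and there are fewer than $M^2$ such pairs. Therefore
\[
\sum_{U,V}G_{UV}^2\le \frac94M+M^2\,64^mD^2,
\]
and
\[
N\ge \frac{M^2/4}{\frac94M+M^2 64^mD^2}=\frac{M}{9+4M64^mD^2},
\]
which is the claimed bound.

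I expect the main obstacle to be the bookkeeping in the coefficient expansion. Two points need care. First, the constant term of each expansion must be absorbed as the $c_0$ of Lemma~\ref{lem:transfer-short}, so that it does not count toward the bound. Second, the $\ell^1$ coefficient bound must be justified for products over disjoint coordinates. This holds because products of indicator monomials are again indicator monomials: $I_j^2=I_j$ never arises across different $j$ here, and even if it did it would only merge terms and decrease the $\ell^1$ norm. Once the entrywise bound $|G_{UV}-\delta_{UV}|\le 8^mD$ is established, the rest is the standard trace--rank computation.
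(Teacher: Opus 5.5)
Your proposal is correct and follows the paper's proof essentially step for step. It uses the same tensor-product family, the same diagonal bound $|G_{UU}-1|\le 2^mD$ and off-diagonal bound $|G_{UV}|\le 8^mD$ via Lemma~\ref{lem:transfer-short}, the same trace--rank inequality, and the same final arithmetic.

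One slip in the coefficient count needs fixing. The correct identity is $2^{|U\cap V|}(2\sqrt2)^{|U\triangle V|}=2^{m+|U\triangle V|}=2^m4^{r}$ with $r=m-|U\cap V|\le m$, which is at most $8^m$. Your intermediate bound $2^m\cdot 2^{3|U\triangle V|/2}$ is not at most $8^m$ in general: for disjoint $U,V$ it equals $16^m$.
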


\begin{proof}
Let
\[
 \mathcal F_m:=\{S\subseteq[d]:|S|=m\},
 \qquad
 \phi_S(x):=\prod_{j\in S}h(x_j),
\]
and define
\[
 v_S:=\frac1{\sqrt N}
 \bigl(\phi_S(x_1),\dots,\phi_S(x_N)\bigr)\in\R^N.
\]
Let \(G=(G_{S,T})_{S,T\in\mathcal F_m}\) be their Gram matrix:
\[
 G_{S,T}
 =
 \langle v_S,v_T\rangle
 =
 \frac1N\sum_{n=1}^N\phi_S(x_n)\phi_T(x_n).
\] 
Note that $G$ is real, symmetric, and positive semidefinite.
Since the vectors \(v_S\) lie in \(\R^N\),
\begin{equation}\label{eq:rank-upper-short}
 \rank(G)\le N.
\end{equation}

For \(S\in\mathcal F_m\), the product expansion of
\[
 \phi_S(x)^2
 =
 \prod_{j\in S}
 \left(\frac32I_j(x)+\frac12\right) = \sum_{U \subseteq S} \prod_{j \in U} \frac32 I_j(x) \prod_{j \in S \setminus U} \frac12
\]
has coefficient absolute-value sum \(\prod_{j \in S} (|3/2| + |1/2|) = 2^m\).  By
\eqref{eq:h-short}, its Lebesgue integral equals \(1\).  Hence
Lemma~\ref{lem:transfer-short} gives
\begin{equation}\label{eq:diag-short}
 |G_{S,S}-1| = \left|\frac{1}{N} \sum_{n=1}^N \phi_S(x_n)^2 - \int_{[0,1)^d} \phi_S(x)^2 \,d x \right| \le 2^m D.
\end{equation}

Now let \(S\ne T\), write \(s=|S\cap T|\) and \(r=m-s\). Let $S \triangle T$ denote the symmetric difference. Then
\begin{align*}
 \phi_S\phi_T
 & =
 \prod_{j\in S\triangle T}h(x_j)
 \prod_{j\in S\cap T}h(x_j)^2  \\ & = \sum_{V \subseteq S \triangle T} \sum_{U \subseteq S \cap T} \left(\prod_{j \in V} \frac{3}{\sqrt{2}} I_j(x_j) \right) \left( -\frac{1}{\sqrt{2}} \right)^{2r - |V|} \left( \prod_{i \in U} \frac32 I_i(x_i) \right) \left( \frac12 \right)^{s - |U|}.
\end{align*}
Since \(S\triangle T\ne \emptyset \), one factor has mean zero, and
therefore
\[
 \int_{[0,1)^d}\phi_S(x)\phi_T(x)\,dx=0.
\]
The coefficient absolute-value sum of the product expansion is
\[
 (2\sqrt2)^{2r}2^s
 =2^m4^r
 \le8^m.
\]
Another application of Lemma~\ref{lem:transfer-short} yields
\begin{equation}\label{eq:offdiag-short}
 |G_{S,T}| = \left| \frac{1}{N} \sum_{n=1}^N \phi_S(x_n) \phi_T(x_n) - \int_{[0,1)^d} \phi_S(x) \phi_T(x) \,d x \right|  \le 8^m D
 \qquad(S\ne T).
\end{equation}

We use the elementary trace--rank inequality
\begin{equation}\label{eq:trace-rank-short}
 \rank(G)\ge\frac{(\tr G)^2}{\tr(G^2)}.
\end{equation}
Indeed, if \(\lambda_1,\dots,\lambda_q>0\) are the nonzero eigenvalues
of \(G\), then Cauchy--Schwarz gives
\[
 (\tr G)^2
 =
 \left(\sum_{j=1}^q\lambda_j\right)^2
 \le q\sum_{j=1}^q\lambda_j^2
 =
 \rank(G)\tr(G^2).
\]

Since \(2^mD\le1/2\), \eqref{eq:diag-short} implies
\[
 \tr G = \sum_{S \in \mathcal{F}_m } G_{S,S} \ge M (1 - 2^m D) \ge \frac M2.
\]
Furthermore, by \eqref{eq:diag-short} we have $|G_{S,S}| \le 1 + 2^m D \le 3/2$ and  using \eqref{eq:offdiag-short},
\[
 \tr(G^2)
 =
 \sum_{S,T\in\mathcal F_m}G_{S,T}^2
 \le
 M\left(\frac32\right)^2
 +M(M-1)64^mD^2
 \le
 \frac94M+M^2 64^mD^2.
\]
Combining this estimate with \eqref{eq:rank-upper-short} and
\eqref{eq:trace-rank-short}, we obtain
\[
 N \ge \rank(G)
 \ge
 \frac{M^2/4}{(9/4)M+M^2 64^mD^2}
 =
 \frac{M}{9+4M64^mD^2}.
\]
\end{proof}

\begin{proof}[Proof of Theorem~\ref{thm:main-strip}]
Fix \(m>2/\alpha\) and let \(C_m\) and
\(\varepsilon_{\alpha,A,m}\) be defined by
\eqref{eq:intro-Cm} and \eqref{eq:intro-epsilon-alpha}.  Let
\(0<\varepsilon\le\varepsilon_{\alpha,A,m}\) and suppose that
\[
 A\varepsilon^{-\alpha}\le d\le B\varepsilon^{-\alpha}.
\]
The definition of \(\varepsilon_{\alpha,A,m}\) gives $\varepsilon \le (A/m)^{1/\alpha}$ and therefore
\[
 d\ge A\varepsilon^{-\alpha}\ge m
\]
and, since \(\alpha-2/m>0\), we obtain from $\varepsilon \le (A/C_m)^{1/(\alpha - 2/m)}$ that
\[
 d\ge A\varepsilon^{-\alpha}
 \ge C_m\varepsilon^{-2/m}.
\]
Using
\[
 \binom dm
 =
 \prod_{j=0}^{m-1}\frac{d-j}{m-j}
 \ge\left(\frac dm\right)^m,
\]
we obtain
\begin{equation}\label{eq:M-lower-short}
 M:=\binom dm
 \ge
 \left(\frac{C_m}{m}\right)^m\varepsilon^{-2}
 =
 \frac9{4\cdot64^m\varepsilon^2}.
\end{equation}

Let \(P\) be any \(N\)-point set with \(D_N^\ast(P)\le\varepsilon\).
Because \(\varepsilon\le2^{-m-1}\), Lemma~\ref{lem:gram-short} applies
with \(D=D_N^\ast(P)\), and therefore
\[
 N
 \ge
 \frac{M}{9+4M64^m\varepsilon^2}.
\]
Put \(t=M64^m\varepsilon^2\).  By \eqref{eq:M-lower-short},
\(t\ge9/4\), and hence
\[
 \frac{M}{9+4M64^m\varepsilon^2}
 =
 \frac1{64^m\varepsilon^2}\frac{t}{9+4t}
 \ge
 \frac1{8\cdot64^m\varepsilon^2}.
\]
Taking the minimum over all such \(N\) gives
\[
 n^\ast(d,\varepsilon)
 \ge
 \frac1{8\cdot64^m}\varepsilon^{-2}.
\]
Finally, \(d\le B\varepsilon^{-\alpha}\) implies
\[
 \varepsilon^{-2}
 \ge
 \frac1B d\varepsilon^{-(2-\alpha)}.
\]
Combining the last two inequalities proves
\[
 n^\ast(d,\varepsilon)
 \ge
 \frac1{8B64^m}
 d\varepsilon^{-(2-\alpha)}.
\]
\end{proof}

\begin{remark}[Orthogonality and the continuous Gram matrix]
\label{rem:continuous-gram}
The use of orthogonality in discrepancy lower bounds goes back to
Roth's classical proof of the \(L_2\)-discrepancy estimate
\cite{Roth54}; see also the surveys \cite{Bil11,Bil14}. Roth's method combines many local discrepancy contributions by means
of orthogonal, usually dyadic Haar-type, test functions.  The present
argument is related in spirit, but the indexing of the orthogonal
systems is different: Roth's functions are indexed by dyadic boxes
and scales, whereas the family used here is indexed by coordinate
subsets.

The Gram-matrix argument may be viewed as a comparison between a
continuous orthonormal system and its discretisation by the point set
\(P\). With the notation from the proof of Lemma~\ref{lem:gram-short}, for \(S,T\in\mathcal F_m\),
\[
 \int_{[0,1)^d}\phi_S(x)\phi_T(x)\,dx=\delta_{S,T},
\]
so the continuous Gram matrix is the identity \(I_M\), of rank
\(M=\binom dm\).  The empirical Gram matrix \(G\) replaces this
integral by the average over the \(N\) points of \(P\).  Since the
corresponding vectors lie in \(\mathbb R^N\), one always has
\(\rank(G)\le N\).

The estimates in the proof give, before simplifying the constants,
\[
 \tr G\ge M(1-2^mD)
\]
and
\[
 \tr(G^2)
 \le
 M(1+2^mD)^2+M(M-1)64^mD^2.
\]
Consequently,
\[
 \rank(G)
 \ge
 \frac{
 M(1-2^mD)^2
 }{
 (1+2^mD)^2+(M-1)64^mD^2
 }.
\]
For fixed \(d\) and \(m\), the right-hand side tends to \(M\) as
\(D\to0\).  Since \(\rank(G)\) is an integer not exceeding \(M\), the displayed
lower bound implies \(\rank(G)=M\) once its right-hand side exceeds
\(M-1\).  Thus, for fixed \(d\) and \(m\), sufficiently small
discrepancy forces \(N\ge M\).

When \(M\) grows simultaneously with \(D^{-1}\), however, an entrywise
error bound tending to zero need not be small enough, relative to \(M\),
to guarantee full rank.  The accumulated contribution of the
\(M(M-1)\) off-diagonal entries must also be controlled.  The
trace--rank inequality provides precisely this quantitative control.
Closely related rank arguments for perturbed identity matrices appear
in Alon's work; see \cite[Lemma~2.2, p.~4]{Alon09} and
\cite[Theorem~8.1 and the following discussion, p.~13]{Alon09}.
Although rank itself is not a continuous quantity, the ratio
\[
 \frac{(\tr G)^2}{\tr(G^2)}
\]
gives a lower bound on the number of directions that remain effectively
orthogonal and hence a lower bound on \(N\).
\end{remark}

\section{Proof of the tractability statement}\label{sec:proof-tractability}

\begin{proof}[Proof of Theorem~\ref{thm:tractability-exponents}]
The upper bound \eqref{eq:HNWW-upper-intro} gives
\(p^\ast\le2\) and \(q^\ast\le1\).

To prove \(q^\ast\ge1\), suppose that
\[
 n^\ast(d,\varepsilon)\le C d^q\varepsilon^{-p}
\]
holds uniformly with \(q<1\).  Fix
\(0<\bar\varepsilon\le\varepsilon_{\rm H}\).  Combining this estimate
with \eqref{eq:Hinrichs-lower-intro} gives
\[
 c_{\rm H}d\bar\varepsilon^{-1}
 \le
 C d^q\bar\varepsilon^{-p}
 \qquad(d\in\N),
\]
which is impossible as \(d\to\infty\).

To prove \(p^\ast\ge2\), suppose that the same type of uniform estimate
holds with \(p<2\) and some finite \(q\ge0\).  Choose
\(\alpha\in(0,1)\) so small that
\[
 p+\alpha q<2,
\]
and put \(d_\varepsilon=\lceil\varepsilon^{-\alpha}\rceil\).  For
sufficiently small \(\varepsilon\),
\[
 \varepsilon^{-\alpha}
 \le d_\varepsilon\le2\varepsilon^{-\alpha}.
\]
Applying Theorem~\ref{thm:main-strip} with \(A=1\) and \(B=2\) gives
\[
 n^\ast(d_\varepsilon,\varepsilon)\ge c_\alpha\varepsilon^{-2},
\]
whereas the assumed upper estimate gives
\[
 n^\ast(d_\varepsilon,\varepsilon)
 \le C2^q\varepsilon^{-(p+\alpha q)}.
\]
This is impossible as \(\varepsilon\downarrow0\), because
\(p+\alpha q<2\).  Therefore \(p^\ast\ge2\), and the theorem follows.
\end{proof}

\paragraph{Declaration of generative AI use}

The author used ChatGPT 5.6 Sol for literature searches, exploratory
development, and the preparation of portions of the exposition and
LaTeX source. All mathematical arguments, calculations, references, and conclusions were independently checked and verified by the author, who takes full responsibility for the contents of the paper.

\paragraph{Formal verification.}
A Lean~4 formalization accompanying this paper is included in the
ancillary files of the arXiv submission. Theorem~1.1 and the new lower
bound for the accuracy exponent are verified internally. The formal
statement of Theorem~1.2 uses the published upper estimate from \cite{HNWW01} and
Hinrichs' lower estimate \cite{H04} as explicit hypotheses. The project can be
checked by running \texttt{lake build}.

\end{document}